\documentclass[12pt,twoside]{amsart}
\usepackage{amssymb}
\usepackage{amscd}

\title[Effective freeness]{Effective base point free 
theorem for log canonical pairs---Koll\'ar type theorem}
\author{Osamu Fujino} 
\subjclass[2000]{Primary 14C20; Secondary 14E30.}
\date{2009/6/20}
\address{Department of Mathematics\endgraf 
Faculty of Science\endgraf 
Kyoto University\endgraf
Kyoto 606-8502\endgraf  
Japan}
\email{fujino@math.kyoto-u.ac.jp}
\newcommand{\Supp}[0]{{\operatorname{Supp}}}
\newcommand{\Bs}[0]{{\operatorname{Bs}}}
\newcommand{\codim}[0]{{\operatorname{codim}}}
\newcommand{\Exc}[0]{{\operatorname{Exc}}}
\newtheorem{thm}{Theorem}[section]

\newtheorem{thm-a}{Theorem}[subsection]
\newtheorem{lem-a}[thm-a]{Lemma}
\newtheorem{cor-a}[thm-a]{Corollary}

\theoremstyle{definition}

\newtheorem{rem}[thm]{Remark}
\newtheorem*{ack}{Acknowledgments}      
\newtheorem*{notation}{Notation}         
\newtheorem{say}[thm]{}

\newtheorem{say-a}[thm-a]{}

\begin{document}
\bibliographystyle{amsalpha+}

\maketitle

\begin{abstract}
Koll\'ar's effective base point free theorem 
for kawamata log terminal 
pairs is very important and was used in Hacon--McKernan's 
proof of pl flips. 
In this paper, we generalize Koll\'ar's 
theorem for {\em{log canonical}} pairs. 
\end{abstract} 


\section{Introduction}
The main purpose of this paper 
is to show the power of the new cohomological 
technique introduced in \cite{ambro}. 
The following theorem is the main theorem of 
this short note. 
It is a generalization of \cite[1.1 Theorem]{kollar}. 
Koll\'ar proved it only for kawamata log terminal pairs. 

\begin{thm}[Effective base point free theorem]\label{main}
Let $(X, \Delta)$ be a {\em{projective}} log canonical 
pair with $\dim 
X=n$. Note that $\Delta$ is an effective $\mathbb Q$-divisor on $X$. 
Let $L$ be a nef Cartier divisor on $X$. Assume that 
$aL-(K_X+\Delta)$ is nef and log big for some $a\geq 0$. 
Then there exists a positive integer $m=m(n, a)$,  
which only depends on $n$ and $a$,  such that 
$|mL|$ is base point free.  
\end{thm}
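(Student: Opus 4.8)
The plan is to run Koll\'ar's original argument for \cite[1.1 Theorem]{kollar} essentially unchanged --- induction on $n=\dim X$, with a point $x\in X$ handled by creating a log canonical center through $x$ and lifting a section from it --- but to replace every appeal to the Kawamata--Viehweg vanishing theorem, which is what pins Koll\'ar down to the kawamata log terminal case, by the torsion-free and vanishing theorems for log canonical pairs coming from the cohomological technique of \cite{ambro}. For the preliminary reductions: I would assume $X$ is not a point and that the theorem already holds in dimension $<n$. By the (non-effective) base point free theorem for log canonical pairs, $|m_{0}L|$ is base point free for some $m_{0}$, so $L$ is semiample; the case $L\equiv 0$ is disposed of separately by an (easier) argument bounding the order of $L$ in the torsion part of $\operatorname{Pic}(X)$, so I assume $L\not\equiv 0$ and let $f\colon X\to Y$ be the contraction defined by $|m_{0}L|$. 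It suffices to produce, for each closed point $x\in X$, a section of $mL$ not vanishing at $x$ for one $m=m(n,a)$ depending only on $n$ and $a$: since the bound is uniform in $x$, absorbing the required divisibilities into $m(n,a)$ and combining openness of the non-vanishing locus with noetherian induction upgrades this to $\Bs\,|mL|=\emptyset$.

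The core of the argument is the construction, at the chosen point $x$, of an effective $\mathbb Q$-divisor $D\sim_{\mathbb Q}cL$ with $c=c(n,a)$ such that $(X,\Delta+D)$ is log canonical on a punctured neighbourhood of $x$, $x$ lies on an lc center, and --- after a tie-breaking perturbation --- the minimal lc center $W\ni x$ is the unique lc center near $x$ and has $\dim W<n$. Then $W$ carries an induced quasi-log structure, $L|_{W}$ is again nef Cartier, and for a suitable effective choice of $m$ the divisor $\bigl(mL-(K_{X}+\Delta+D)\bigr)|_{W}$ is nef and log big; the induction hypothesis applied to $W$ gives that $|m(L|_{W})|$ is base point free, hence a section of $\mathcal O_{W}(mL)$ nonzero at $x$. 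Since $mL-(K_{X}+\Delta+D)$ is nef and log big and $W$ is a union of lc centers of $(X,\Delta+D)$, the vanishing theorem for log canonical pairs yields
\[
H^{1}\bigl(X,\ \mathcal I_{W}\otimes\mathcal O_{X}(mL)\bigr)=0,
\]
so that section extends to all of $X$ without vanishing at $x$. Collecting the bounds accumulated along the way gives the desired $m=m(n,a)$.

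The quantitative point is the production of $D$ with $c$ bounded purely in terms of $n$ and $a$. Here one exploits that $A:=aL-(K_{X}+\Delta)$ is nef and big: writing $A\sim_{\mathbb Q}(\text{ample})+(\text{effective})$ and using a lower bound for the relevant intersection numbers, one finds an effective divisor through $x$ of controlled multiplicity and passes to an lc center by the usual comparison of multiplicity with the log canonical threshold, keeping all constants functions of $n$ and $a$ only. This is immediate when $L$ is big; when $L$ is merely semiample I would instead argue over the base $Y$ of $f$, using that on a general fibre $F$ one has $L|_{F}\equiv 0$, hence $-(K_{F}+\Delta_{F})$ is nef and log big with $\dim F<n$, so the fibre is governed by induction, while the contribution of the base is controlled separately and the two estimates are patched.

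I expect the main obstacle to be exactly this uniform bookkeeping of the effective constants through the induction, compounded by a genuinely log-canonical difficulty: the lc centers one creates can be non-normal and non-isolated, and there is no klt locus available into which to perturb, so the tie-breaking step and the validity of the section-lifting on such bad centers are delicate. It is precisely here that the \cite{ambro} machinery is essential --- the torsion-freeness (no associated primes of the higher direct images along images of lc centers) and the accompanying Kodaira-type vanishing for nef and log big divisors are what make the lifting step legitimate in the log canonical category.
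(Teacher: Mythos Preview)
Your approach is an Angehrn--Siu style argument: induct on $\dim X$, create an lc center through a given point, and lift sections from it. The paper takes a genuinely different route, namely Koll\'ar's original ``modified base point freeness'' method: pass to the semiample fibration $g\colon X\to S$ with $L\sim g^*L'$ and $L'$ ample, find (Lemma~\ref{key}) a member of $|2(\ulcorner a\urcorner+n)L|$ missing all lc centers of $(X,\Delta)$, and then iteratively lower $\dim\Bs|D_S|$ on the base $S$ (Lemma~\ref{lem1}, Corollary~\ref{25}) by producing singularities along the base locus with general members of $|D|$ and invoking the torsion-free/vanishing package (Theorem~\ref{ap1}) to lift sections from a divisor $F$ on a resolution. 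No induction on $\dim X$ is used, and no lower-dimensional variety ever needs to carry an lc pair structure.

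Your scheme has two genuine gaps. First, the induction does not close as stated: the hypothesis is for lc pairs $(X,\Delta)$, but the minimal lc center $W$ is in general not normal and does not carry a boundary $\Delta_W$ making $(W,\Delta_W)$ lc; at best it is a quasi-log variety, so you would have to prove the stronger quasi-log statement from the outset, or invoke subadjunction results you have not stated. Second, and more seriously, when $L$ is nef but not big the construction of $D\sim_{\mathbb Q}cL$ with bounded $c$ and an lc center through an \emph{arbitrary} point $x$ simply fails: all sections of $|mL|$ are pulled back from $S$ with $\dim S<n$, so $h^0(X,mL)$ grows like $m^{\dim S}$, which is too slow to force multiplicity $>n$ at a point of $X$ by parameter counting. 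Your patch via the fibre is not a fix: on a general fibre $F$ one has $L|_F\equiv 0$, so the inductive statement for $F$ only says $mL|_F$ has a nowhere-vanishing section, which produces no singularity and no lc center through $x$. Koll\'ar's method, as adapted in the paper, sidesteps both issues by working relative to $S$ (where $L'$ is ample), by creating singularities only along $\Bs|D_S|$ rather than at arbitrary points, and by never descending to a lower-dimensional pair.
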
 

For the relative statement, see Theorem \ref{277} below. 

\begin{rem}
We can take $m(n, a)=2^{n+1}(n+1)!(\ulcorner a\urcorner+n)$ 
in Theorem \ref{main}. 
\end{rem}

By the results in \cite{ambro}, 
we can apply a modified 
version of X-method to log canonical pairs. 
More precisely, generalizations of 
Koll\'ar's 
vanishing and torsion-free theorems 
to the context of embedded simple normal crossing pairs replace the 
Kawamata--Viehweg 
vanishing theorem in the world of log canonical pairs. 
For the details, see \cite{fujino4}. 
Here, we generalize Koll\'ar's arguments in \cite{kollar} 
for log canonical pairs. 
This further illustrates the usefulness 
of our new cohomological 
package. 
For the benefit of the reader, we will explain the new vanishing 
and torsion-free theorems in the appendix 
(see Section \ref{sec3}). 
The starting point of our main theorem is 
the next theorem (see \cite[Theorem 7.2]{ambro}). 
For the proof, see \cite[Theorem 4.4]{fujino4}. 
Ambro's original statement is much more general than 
Theorem \ref{bpf}. Unfortunately, 
he gave no proofs in \cite{ambro}. 

\begin{thm}[Base point free theorem for log canonical pairs]\label{bpf}
Let $(X, \Delta)$ be a log canonical pair and $L$ a $\pi$-nef 
Cartier divisor on $X$, where $\pi:X\to V$ is a {\em{projective}} 
morphism. Assume that $aL-(K_X+\Delta)$ is $\pi$-nef and 
$\pi$-log big for some 
positive real number $a$. 
Then $\mathcal O_X(mL)$ is $\pi$-generated for 
all $m\gg 0$. 
\end{thm}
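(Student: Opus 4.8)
The plan is to run the Kawamata--Shokurov X-method, with the Kawamata--Viehweg vanishing theorem replaced throughout by the vanishing and torsion-free theorems for log canonical pairs recalled in Section~\ref{sec3}. Since the assertion is local on $V$, I would first reduce to the case $V$ affine. By standard arguments --- the inclusion $\Bs|kmL|\subseteq\Bs|mL|$ for every $k\geq 1$, a compactness argument, and a fibration argument using one $\pi$-generated multiple --- it is then enough to show that for every closed point $x\in X$ there is an integer $m$ with $x\notin\Bs|mL|$. The positivity input I would record at the outset is that for every integer $m\geq\ulcorner a\urcorner$ the divisor $mL-(K_X+\Delta)=(m-a)L+(aL-(K_X+\Delta))$ is a sum of a $\pi$-nef divisor and a $\pi$-nef and $\pi$-log big one, hence is itself $\pi$-nef and $\pi$-log big; consequently $R^i\pi_*\mathcal O_X(mL)=0$ for $i>0$ by the vanishing theorem, and since this divisor is also $\pi$-big the sheaf $\pi_*\mathcal O_X(mL)$ is big over $V$ for all such $m$.

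The argument itself I would organize as an induction on $n=\dim X$, the case $n=0$ being trivial, splitting according to whether the chosen closed point $x$ lies on a log canonical center of $(X,\Delta)$. Suppose first that it does, and let $W$ be a minimal log canonical center through $x$. The adjunction provided by the new theory yields an effective $\mathbb Q$-divisor $\Delta_W$ on $W$ with $(W,\Delta_W)$ log canonical, $(K_X+\Delta)|_W=K_W+\Delta_W$, and whose log canonical centers are precisely those of $(X,\Delta)$ contained in $W$; thus $aL|_W-(K_W+\Delta_W)=(aL-(K_X+\Delta))|_W$ is $\pi|_W$-nef and $\pi|_W$-log big. By the induction hypothesis applied to $(W,\Delta_W)$, $L|_W$ and $a$, the sheaf $\mathcal O_W(mL|_W)$ is $\pi|_W$-generated for all $m\gg 0$. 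On the other hand, for $m\geq\ulcorner a\urcorner$ the generalized torsion-free and vanishing theorems applied to $\mathcal I_W\otimes\mathcal O_X(mL)$ --- admissible because $mL-(K_X+\Delta)$ is $\pi$-nef and $\pi$-log big and $W$ is a union of log canonical centers --- give $R^1\pi_*(\mathcal I_W\otimes\mathcal O_X(mL))=0$, so that $\pi_*\mathcal O_X(mL)\to\pi_*\mathcal O_W(mL|_W)$ is surjective. Hence sections of $\mathcal O_W(mL|_W)$ generating it at $x$ lift to $X$, and $x\notin\Bs|mL|$ for those $m$.

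Suppose now that $x$ lies on no log canonical center, so that $(X,\Delta)$ is kawamata log terminal near $x$. Fix $m_0\geq\ulcorner a\urcorner$; since $\pi_*\mathcal O_X(m_0L)$ is big over $V$, after enlarging $m_0$ I can take a general $D_0\in|m_0L|$ with $\operatorname{mult}_xD_0$ as large as I please, so that $(X,\Delta+c_0D_0)$ is log canonical but not kawamata log terminal near $x$ with a log canonical center through $x$, where $c_0=\operatorname{lct}_x(X,\Delta;D_0)<1$. Using $\pi$-bigness of $aL-(K_X+\Delta)$ I would then perform the usual tie-breaking --- perturbing $\Delta+c_0D_0$ by a small combination of general members of linear systems $|m_1L|$ and of $\mathbb Q$-divisors numerically equivalent over $V$ to small multiples of an ample divisor --- to obtain a log canonical pair $(X,\Delta')$ with a \emph{unique} minimal log canonical center $Z$ through $x$, arranged so that $mL-(K_X+\Delta')$ stays $\pi$-nef and $\pi$-log big along the surviving centers for all $m\gg 0$. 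If $\dim Z=0$, then $Z=\{x\}$ and the vanishing theorem for $(X,\Delta')$ gives $R^1\pi_*(\mathcal I_x\otimes\mathcal O_X(mL))=0$, so a section on $\{x\}$, which exists after a suitable twist since $\pi_*\mathcal O_X(mL)$ is big, lifts and $x\notin\Bs|mL|$. If $\dim Z>0$, apply adjunction to $Z$, invoke the induction hypothesis on $Z$ to generate $\mathcal O_Z(mL|_Z)$ at $x$, and lift exactly as in the previous paragraph.

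The step I expect to be the main obstacle is the tie-breaking: one must perturb $\Delta$ into $\Delta'$ so as to isolate a single minimal log canonical center through $x$ without leaving the log canonical category and, crucially, without destroying the simultaneous nefness and log bigness of $mL-(K_X+\Delta')$ along the log canonical centers that persist. It is precisely here that the hypothesis is $\pi$-log big --- bigness on \emph{every} log canonical center rather than merely on $X$ --- and precisely here that one needs the ideal-sheaf vanishing $R^{>0}\pi_*(\mathcal I_W\otimes\mathcal O_X(K_X+\Delta'+N))=0$ for $N$ $\pi$-nef and $\pi$-log big, which is supplied by the cohomological package of Section~\ref{sec3} and is unavailable from Kawamata--Viehweg once $(X,\Delta)$ genuinely carries log canonical centers. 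The companion input --- an adjunction producing $(W,\Delta_W)$ along which the restriction of $L$ and the log bigness behave correctly --- is the other place where the new machinery, not classical vanishing, does the work.
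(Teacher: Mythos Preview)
The paper does not contain a proof of Theorem~\ref{bpf}. It is quoted as an input to the main result, with the explicit remark that Ambro stated it without proof in \cite{ambro} and that a proof can be found in \cite[Theorem~4.4]{fujino4}; the paper also points to \cite{fu4} for the special case where $V$ is a point and $aL-(K_X+\Delta)$ is ample. There is therefore no ``paper's own proof'' against which to compare your proposal.

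For what it is worth, your sketch is in the right spirit --- run the X-method with Kawamata--Viehweg replaced by the package of Section~\ref{sec3} --- and this is indeed the strategy of the cited reference. One place where your outline would not go through as written is the adjunction step in the first case: you assert that ``the new theory yields an effective $\mathbb Q$-divisor $\Delta_W$ on $W$ with $(W,\Delta_W)$ log canonical, $(K_X+\Delta)|_W=K_W+\Delta_W$, and whose log canonical centers are precisely those of $(X,\Delta)$ contained in $W$''. Nothing in Section~\ref{sec3} supplies this; Theorems~\ref{ap1} and~\ref{ap2} are vanishing and torsion-free statements, not an adjunction formula on lc centers, and producing such a $\Delta_W$ with the claimed center structure is genuinely delicate (it is not a consequence of classical subadjunction either). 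In \cite{fujino4} the induction is carried by the quasi-log structure --- one restricts to the non-klt locus as a quasi-log variety rather than manufacturing a boundary $\Delta_W$ --- and it is through that formalism that the $\pi$-log bigness hypothesis propagates and the ideal-sheaf vanishing you need becomes available. Your identification of tie-breaking as the main obstacle is correct, but its resolution likewise lives in the quasi-log framework rather than in a direct perturbation of $\Delta$.
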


The paper \cite{fu4} may help the reader to understand 
Theorem \ref{bpf}. In \cite{fu4}, Theorem \ref{bpf} 
is proved under the assumption that 
$V$ is a point and $aL-(K_X+\Delta)$ is ample. 

We summarize the contents of this paper. 
In Section \ref{sec2}, we prove 
Theorem \ref{main}. 
In Subsection \ref{2.1}, 
we give a slight generalization of 
Koll\'ar's modified base point freeness method. 
We change Koll\'ar's formulation 
so that we can apply our new cohomological 
technique. 
In Subsection \ref{sub2.2}, we use the modified 
base point freeness method to obtain Theorem \ref{main}. 
Here, we need Theorem \ref{bpf}. 
Section \ref{sec3} is an appendix, where 
we quickly review 
our new vanishing and torsion-free theorems 
for the reader's convenience. 
The reader can find Angehrn--Siu type effective base point 
freeness and point separation for {\em{log canonical}} 
pairs in \cite{fujino3}. 

\begin{notation} 
We will work over the complex number field $\mathbb C$ throughout  
this paper. 

Let $r$ be a real number. 
The {\em{integral part}} $\llcorner r\lrcorner$ is 
the largest integer at most $r$ and 
the {\em{fractional part}} $\{r\}$ is defined by $r-\llcorner r\lrcorner$. 
We put $\ulcorner r\urcorner =-\llcorner -r \lrcorner$ and 
call it the {\em{round-up}} of $r$. 

Let $X$ be a normal variety and $B$ an effective 
$\mathbb Q$-divisor such that 
$K_X+B$ is $\mathbb Q$-Cartier. 
Then we can define the discrepancy 
$a(E, X, B)\in \mathbb Q$ for 
every prime divisor $E$ over $X$. 
If $a(E, X, B)\geq -1$ (resp.~$>-1$) for 
every $E$, then $(X, B)$ is called {\em{log canonical}} (resp.~{\em{kawamata 
log terminal}}). We sometimes abbreviate log canonical 
to {\em{lc}}. 

Assume that $(X, B)$ is log canonical. 
If $E$ is a prime divisor over $X$ 
such that 
$a(E, X, B)=-1$, 
then $c_X(E)$ is called a {\em{log canonical center}} 
({\em{lc center}}, for short) 
of $(X, B)$, where $c_X(E)$ is 
the closure of the 
image 
of $E$ on $X$. 
A $\mathbb Q$-Cartier $\mathbb Q$-divisor $L$ 
on $X$ is called {\em{nef and log big}} 
if $L$ is nef and big and $L|_{W}$ is big for every 
lc center $W$ of $(X, B)$. 
The relative version of nef and log bigness can be defined 
similarly. 

For a $\mathbb Q$-divisor $D=\sum _{i=1}^r d_i D_i$, 
where $D_i$ is a prime divisor for every $i$ and $D_i\ne D_j$ for 
$i\ne j$, we call $D$ a {\em{boundary}} $\mathbb Q$-divisor 
if $0\leq d_i \leq 1$ for every $i$. 
We denote by $\sim_{\mathbb Q}$ 
the $\mathbb Q$-linear equivalence of $\mathbb Q$-Cartier 
$\mathbb Q$-divisors. 

We write $\Bs|D|$ the base locus of the linear 
system $|D|$. 
\end{notation}

\begin{ack}
I was partially supported by the Grant-in-Aid for Young Scientists 
(A) $\sharp$20684001 from JSPS. I was 
also supported by the Inamori Foundation. I thank 
the referee for useful comments. 
\end{ack}

\section{Effective base point free theorem}\label{sec2} 

\subsection{Modified base point freeness method after Koll\'ar}\label{2.1}

In this subsection, we slightly generalize Koll\'ar's method in \cite{kollar}. 

\begin{say-a}
Let $(X, \Delta)$ be a log canonical pair and 
$N$ a Cartier divisor on $X$. Let $g:X\to S$ be 
a proper surjective morphism onto a normal variety $S$ with connected 
fibers. 
Let $M$ be a semi-ample $\mathbb Q$-Cartier $\mathbb Q$-divisor 
on $X$. Assume that 
\begin{equation}
N\sim_{\mathbb Q}K_X+\Delta+B+M,
\end{equation} 
where $B$ is an effective $\mathbb Q$-Cartier 
$\mathbb Q$-divisor on $X$ such that 
$\Supp B$ contains no lc centers of $(X, \Delta)$ and 
that $B=g^*(B_S)$, where $B_S$ is an effective 
ample $\mathbb Q$-Cartier $\mathbb Q$-divisor on $S$.  
Let $X\setminus W$ be the largest open set such that 
$(X, \Delta+B)$ is lc. 
Assume that $W\ne \emptyset$, and 
let $Z$ be an irreducible component 
of $W$ such that $\dim g(Z)$ is maximal. We note that 
$g(W)$ is not equal to $S$ since $B=g^*(B_S)$. 
Take a resolution $f:Y\to X$ such that 
the exceptional locus $\Exc(f)$ is a simple normal 
crossing divisor on $Y$,  
and put $h=g\circ f:Y\to S$. We can write  
\begin{equation}
K_Y=f^*(K_X+\Delta)+\sum e_i E_i\ \text{with}\ e_i\geq -1, 
\end{equation} and 
\begin{equation}
f^*B=\sum b_i E_i. 
\end{equation} 
We can assume that $\Supp (f^{-1}_*B\cup 
f^{-1}_*\Delta\cup \sum E_i\cup h^{-1}
(g(Z)))$ and $\Supp (h^{-1}(g(Z)))$ are 
simple normal crossing 
divisors. 
Let $c$ be the largest real number such that $K_X+\Delta +cB$ 
is lc over the generic point of $g(Z)$. 
We note that 
\begin{equation}
K_Y=f^*(K_X+\Delta +cB)+\sum (e_i -cb_i)E_i. 
\end{equation} 
By the assumptions, we know $0<c<1$ and $c\in \mathbb Q$. 
If $cb_i-e_i<0$, then $E_i$ is $f$-exceptional. 
If $cb_i-e_i\geq 1$ and $g(Z)$ is a proper subset of 
$h(E_i)$, then $cb_i-e_i=1$. 
We can write \begin{equation}f^*N\sim _{\mathbb Q} 
K_Y+f^*M+(1-c) f^*B 
+\sum (cb_i-e_i)E_i\end{equation} and 
\begin{equation}
\sum \llcorner cb_i-e_i\lrcorner E_i =F+G_1+G_2-H,
\end{equation} 
where $F$, $G_1$, $G_2$, $H$ are effective 
and without common irreducible 
components such that 
\begin{itemize}
\item the $h$-image of every irreducible component of $F$ is $g(Z)$, 
\item the $h$-image of every irreducible 
component of $G_1$ does 
not contain $g(Z)$, 
\item the $h$-image of every irreducible component of $G_2$ contains 
$g(Z)$ but does not coincide with $g(Z)$, and 
\item $H$ is $f$-exceptional. 
\end{itemize} 
Note that $G_2=\llcorner G_2\lrcorner$ 
is a reduced simple normal crossing 
divisor on $Y$ and that no lc center 
$C$ of $(Y, G_2)$ satisfies $h(C)\subset g(Z)$. 
Here, we used the fact that 
$\Supp (h^{-1}(g(Z)))$ and $\Supp (h^{-1}(g(Z))\cup G_2)$ are 
simple normal crossing divisors on $Y$.  
We put $N'=f^*N+H-G_1$ and consider the short exact sequence 
\begin{equation}
0\to \mathcal O_Y(N'-F)\to \mathcal O_Y(N')\to 
\mathcal O_F(N')\to 0. 
\end{equation} 
Note that $$N'-F\sim _{\mathbb Q}K_Y+f^*M+(1-c)f^*B 
+\sum \{cb_i -e_i\}E_i+G_2. $$
So, the connecting homomorphism 
\begin{equation} 
h_*\mathcal O_F(N')\to R^1h_*\mathcal O_Y(N'-F) 
\end{equation} 
is a zero map since $h(F)=g(Z)$ is a proper subset of $S$ 
and every non-zero local section of 
$R^1h_*\mathcal O_Y(N'-F)$ contains 
$h(C)$ in its support, where $C$ is some stratum of $(Y, G_2)$. 
For the details, see Theorem \ref{ap1}, (a). 
Thus, we know that 
\begin{equation}
0\to h_*\mathcal O_Y(N'-F)\to h_*\mathcal O_Y(N')\to 
h_*\mathcal O_F(N')\to 0\end{equation} 
is exact. 
Moreover, by the vanishing theorem (see Theorem \ref{ap1}, (b)), 
we have 
\begin{equation}
H^1(S, h_*\mathcal O_Y(N'-F))=0. 
\end{equation}
Therefore, 
\begin{equation}\label{100}  
H^0(S, h_*\mathcal O_Y(N'))\to H^0(S, 
h_*\mathcal O_F(N'))
\end{equation} 
is surjective. 
It is easy to see that $F$ is a 
reduced simple normal crossing divisor on 
$Y$. 
We note that no irreducible 
components of $F$ 
appear in $\sum \{cb_i-e_i\}E_i$ 
and that 
\begin{equation}
N'|_{F}\sim _{\mathbb Q} 
K_{F}+(f^*M+(1-c)f^*B)|_{F}
+\sum \{cb_i -e_i\}{E_i}|_{F}+{G_2}|_F. 
\end{equation}
Thus, $h^i(S, h_*\mathcal O_{F}(N'))=0$ for 
all $i>0$ by the vanishing theorem (see 
Theorem \ref{ap1}, (b)).  
Thus, we obtain  
\begin{equation} 
h^0(F, \mathcal O_{F}(N'))=\chi 
(S, h_*\mathcal O_{F}(N')). 
\end{equation}
\end{say-a}
\begin{say-a}
In our application, $M$ will be a variable 
divisor of the form $M_j=M_0+jL$, where $M_0$ is 
a semi-ample $\mathbb Q$-Cartier 
$\mathbb Q$-divisor and $L=g^*L_S$ with 
an ample Cartier divisor $L_S$ on $S$. 
Then we get that 
\begin{equation}
h^0(F, \mathcal O_{F}(N'_0+jf^*L))
=\chi (S, h_*\mathcal O_{F}(N'_0)\otimes 
\mathcal O_S(jL_S))
\end{equation}  
is a polynomial in $j$ for $j\geq 0$, 
where 
\begin{equation}
N'_0=f^*N_0+H-G_1
\end{equation} 
and 
\begin{equation}
N_0\sim _{\mathbb Q} K_X+\Delta +B+M_0.  
\end{equation} 
\end{say-a}

\begin{say-a}\label{2.3} 
Assume that we establish 
$h^0(F, \mathcal O_{F}(N'))\ne 0$. 
By the above surjectivity (\ref{100}), we can lift sections to 
$H^0(Y, \mathcal O_Y(f^*N+H-G_1))$. 
Since $F\not \subset \Supp G_1$, we get a section $s\in 
H^0(Y, \mathcal O_Y(f^*N+H))$ which is not identically 
zero along $F$. 
We know $H^0(Y, \mathcal O_Y(f^*N+H))\simeq 
H^0(X, \mathcal O_X(N))$ because $H$ is $f$-exceptional. 
Thus $s$ descends to a section of $\mathcal O_X(N)$ which does not 
vanish along $Z=f(F)$.  
\end{say-a} 

\subsection{Proof of the main theorem}\label{sub2.2} 

The following lemma, which is the crucial technical 
result needed for Theorem \ref{main}, is essentially the 
same as \cite[2.2.~Lemma]{kollar}. 

\begin{lem-a}\label{lem1}
Let $g:X\to S$ be a proper surjective 
morphism with connected fibers. 
Assume that $X$ is projective, $S$ is normal and $(X, \Delta)$ is lc for some 
effective $\mathbb Q$-divisor $\Delta$. 
Let $D^0_S$ be an ample Cartier divisor 
on $S$ and let $D_S\sim mD^0_S$ for some $m>0$. 
We put $D^0=g^*D^0_S$ and $D=g^*D_S$. 
Assume that $aD^0-(K_X+\Delta)$ is nef and log big for some $a\geq 0$. 
Assume that $|D_S|\ne \emptyset$ and 
that $\Bs |D|$ contains no lc centers 
of $(X, \Delta)$, and let $Z_S\subset \Bs|D_S|$ 
be an irreducible component with 
minimal $k=\codim _S Z_S$. Then, 
with at most $\dim Z_S$ exceptions, 
$Z_S$ is not contained in $\Bs|kD_S+(j+\ulcorner 2a\urcorner
+1)D^0_S|$ for 
$j\geq 0$. 
\end{lem-a}
\begin{proof}
Pick general $B_i\in |D|$ and let 
\begin{equation}
B=\frac{1}{2m}B_0+B_1+\cdots +B_k. 
\end{equation}
Then $B\sim _{\mathbb Q}\frac{1}{2}D^0+kD$, 
$(X, \Delta+B)$ is lc outside $\Bs|D|$ and 
$(X, \Delta+B)$ is not lc at the generic points of $g^{-1}(Z_S)$. 
For the proof, see \cite[(2.1.1) Claim]{kollar}. 
We will apply the method in \ref{2.1} with 
\begin{equation}
N_j=kD+(j+\ulcorner 2a\urcorner+1)D^0
\end{equation}
\begin{equation}
M_0=\ulcorner 2a\urcorner 
D^0-(K_X+\Delta)+\frac{1}{2}D^0, {\text{and}}
\end{equation}
\begin{equation}
M_j=M_0+jD^0. 
\end{equation} 
We note that $M_j$ is semi-ample 
for every $j\geq 0$ 
by Theorem \ref{bpf} since $M_j$ is nef and $M_j-(K_X+\Delta)$ is 
nef and log big. 
The crucial point is to show that 
\begin{equation}
h^0(F, \mathcal O_{F}(N'_j))=\chi 
(S, h_*\mathcal O_{F}(N'_j)) 
\end{equation} 
is not identically zero, where  
\begin{equation}
N'_j=f^*N_j+H-G_1 
\end{equation} 
for every $j$. 
Let $C\subset F$ be a general fiber 
of $F\to h(F)=Z_S$. 
Then 
\begin{equation}
N'_0|_C=(h^*(kD_S+(\ulcorner 2a\urcorner
+1)D^0_S)+H
-G_1)|_C=H|_C. 
\end{equation} 
Hence $h_*\mathcal O_{F}(N'_0)$ is 
not the zero sheaf, and 
\begin{equation}
H^0(F, \mathcal O_{F}(N'_j))
=H^0(S, h_*\mathcal O_{F}(N'_0)\otimes 
\mathcal O_S(jD^0_S))\ne 0
\end{equation} 
for 
$j\gg 1$. 
Therefore, $h^0(F, \mathcal O_{F}(N'_j))$ is a non-zero 
polynomial of degree $\dim Z_S$ in $j$ for $j\geq 0$. Thus it can vanish 
for at most $\dim Z_S$ different values of $j$. This implies 
that 
\begin{equation}
f(F)\not\subset 
\Bs|kD+(j+\ulcorner 2a\urcorner+1)D^0|=g^{-1}
\Bs|kD_S+(j+\ulcorner 2a\urcorner+1)D^0_S|  
\end{equation} 
by \ref{2.3}, with at most $\dim Z_S$ exceptions. 
Therefore, $Z_S=h(F)\not \subset \Bs|kD_S+(j+\ulcorner 2a
\urcorner+1)D^0_S|$. 
This is what we wanted. 
\end{proof}

The next corollary is obvious by Lemma \ref{lem1}. 
For the proof, see \cite[2.3 Corollary]{kollar}. 

\begin{cor-a}\label{25} 
Assume in addition that $m\geq 2a+\dim S$ and 
set $k=\codim _S \Bs|D_S|$. Then 
\begin{equation}
\dim \Bs|(2k+2)D_S|<\dim \Bs |D_S|.  
\end{equation}
\end{cor-a}

\begin{lem-a}\label{key}
We use the same notation as in {\em{Theorem \ref{main}}}. 
Then we can find an effective divisor $D\in |2(\ulcorner a\urcorner+n)L|$ 
such that $D$ contains no lc centers of $(X, \Delta)$. 
\end{lem-a}

\begin{proof}
Let $C$ be an arbitrary lc center of $(X, \Delta)$. 
When $(X, \Delta)$ is kawamata log terminal, 
we put $C=X$. 
We consider the short exact sequence 
\begin{equation}
0\to \mathcal I_C\otimes \mathcal O_X(jL)
\to \mathcal O_X(jL)\to \mathcal O_C(jL)\to 0, 
\end{equation} 
where $\mathcal I_C$ is the defining ideal sheaf of $C$. 
By the vanishing theorem, 
$H^i(X, \mathcal I_C\otimes \mathcal O_X(jL))=
H^i(X, \mathcal O_X(jL))=0$ for all 
$i\geq 1$ and $j\geq a$ 
(see Theorem \ref{ap2}). Therefore, we have 
$H^i(C, \mathcal O_C(jL))=0$ for 
all $i\geq 1$ and $j\geq a$. 
Thus $h^0(C, \mathcal O_C(jL))=\chi 
(C, \mathcal O_C(jL))$ is a non-zero polynomial in $j$ 
since $|mL|$ is base point free for $m\gg 0$ (see Theorem \ref{bpf}). 
On the other hand, the map  
\begin{equation}
H^0(X, \mathcal O_X(jL))\to H^0(C, \mathcal O_C(jL))
\end{equation} 
is surjective for $j\geq a$ since 
$H^1(X, \mathcal I_C\otimes \mathcal O_X(jL))=0$ for 
$j\geq a$ by the vanishing theorem (see 
Theorem \ref{ap2}). 
Thus, with at most $\dim C$ exceptions, 
$C\not \subset \Bs|(\ulcorner a\urcorner+j)L|$ for $j\geq 0$. 
Therefore, we can find an effective 
divisor $D\in |2(\ulcorner a\urcorner+n)L|$ such 
that $D$ contains no lc centers. 
\end{proof}

\begin{proof}[Proof of {\em{Theorem \ref{main}}}] 
By the base point free theorem 
for log canonical pairs (see Theorem \ref{bpf}), 
there exists a positive integer $l$ such 
that $g=\Phi_{|lL|}: X\to S$ is a proper surjective morphism 
onto a normal variety with connected fibers such that 
$L\sim g^*L'$ for some ample Cartier divisor $L'$ on $S$. 
By Lemma \ref{key}, we can find 
$D\in |2(\ulcorner a\urcorner +n)L|$ such 
that $D$ contains no lc centers. 
Then Corollary \ref{25} can be used repeatedly to lower the 
dimension of $\Bs|mL|$. 
This way we obtain that 
$|2^{n+1}(n+1)!(\ulcorner a\urcorner+n)L|$ is base 
point free. 
\end{proof}

We close this section with the following theorem, 
which is the relative version of Theorem \ref{main}. 
We leave the proof for the reader's exercise. 
Of course, we need the relative version of Theorem \ref{ap2} to check 
Theorem \ref{277}. See \cite[Theorem 4.4]{ambro} 
and \cite[Theorem 3.39]{fujino4}. 

\begin{thm-a}\label{277} 
Let $(X, \Delta)$ be a log canonical 
pair with $\dim 
X=n$ and $\pi:X\to V$ a {\em{projective}} surjective morphism. 
Note that $\Delta$ is an effective $\mathbb Q$-divisor on $X$. 
Let $L$ be a $\pi$-nef Cartier divisor on $X$. Assume that 
$aL-(K_X+\Delta)$ is $\pi$-nef and $\pi$-log 
big for some $a\geq 0$. 
Then there exists a positive integer $m=m(n, a)$,  
which only depends on $n$ and $a$, 
such that 
$\mathcal O_X(mL)$ is $\pi$-generated. 
\end{thm-a} 

\section{Appendix:~New cohomological package}\label{sec3}

In this appendix, we quickly review Ambro's formulation 
of Koll\'ar's torsion-free and vanishing theorems. 

\begin{say}
Let $Y$ be a simple normal crossing divisor 
on a smooth 
variety $M$, and let $D$ be a boundary $\mathbb Q$-divisor 
on $M$ such that 
$\Supp (D+Y)$ is simple normal crossing and 
$D$ and $Y$ have no common irreducible components. 
We put $B=D|_Y$ and consider the pair $(Y, B)$. 
Let $\nu:Y^{\nu}\to Y$ be the normalization. 
We put $K_{Y^\nu}+\Theta=\nu^*(K_Y+B)$. 
A {\em{stratum}} of $(Y, B)$ is an irreducible component of $Y$ or 
the image of some lc center of $(Y^\nu, \Theta)$. 
When $Y$ is smooth and $B$ is a boundary $\mathbb Q$-divisor 
on $Y$ such that 
$\Supp B$ is simple normal crossing, we 
put $M=Y\times \mathbb A^1$ and $D=B\times \mathbb A^1$. 
Then $(Y, B)\simeq (Y\times \{0\}, B\times \{0\})$ satisfies 
the above conditions. 
\end{say}
The following theorem is a special 
case of \cite[Theorem 3.2]{ambro}. 

\begin{thm}\label{ap1}
Let $(Y, B)$ be as above. 
Let 
$f:Y\to X$ be a proper morphism and $L$ a Cartier 
divisor on $Y$. 

$(a)$ Assume that $H\sim _{\mathbb Q}L-(K_Y+B)$ is $f$-semi-ample.
Then 
every non-zero local section of $R^qf_*\mathcal O_Y(L)$ contains 
in its support the $f$-image of 
some strata of $(Y, B)$. 

$(b)$ Let $\pi:X\to S$ be a proper morphism, and 
assume that $H\sim _{\mathbb Q}f^*H'$ for 
some $\pi$-ample $\mathbb Q$-Cartier 
$\mathbb Q$-divisor $H'$ on $X$. 
Then, $R^qf_*\mathcal O_Y(L)$ is $\pi_*$-acyclic, that is, 
$R^p\pi_*R^qf_*\mathcal O_Y(L)=0$ for all $p>0$. 
\end{thm}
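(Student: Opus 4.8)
The plan is to derive both parts of the theorem from a single Kollár-type injectivity theorem on the simple normal crossing pair $(Y,B)$, exactly as in Kollár's treatment of the smooth case, with all the analytic content now carried by the mixed Hodge structure of $(Y,B)$. Throughout I would work with the normalization $\nu:Y^\nu\to Y$ and carefully track the strata of $(Y,B)$, since these are precisely the loci that are permitted to appear in the support of local sections of $R^qf_*\mathcal O_Y(L)$. Writing $L\sim_{\mathbb Q}K_Y+B+H$ with $H$ (locally over $X$) semi-ample, the goal of the foundational step is the injectivity of
\begin{equation}
H^q(Y,\mathcal O_Y(L))\to H^q(Y,\mathcal O_Y(L+D))
\end{equation}
for every effective Cartier divisor $D$ whose support is contained in the support of a general member of $|mH|$, for $m$ sufficiently divisible, and which contains no stratum of $(Y,B)$.

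To prove this injectivity I would first reduce to the case $H\sim_{\mathbb Q}0$ by passing to a cyclic cover branched along a general member of $|mH|$ followed by a log resolution chosen to preserve the simple normal crossing structure and to keep the strata distinct. The heart of the matter is then the $E_1$-degeneration of the Hodge-to-de Rham spectral sequence attached to the log de Rham complex of the pair, in the Deligne–Steenbrink theory for simple normal crossing varieties: this strictness, together with compatibility with the weight filtration on the mixed Hodge structure of the open complement, forces the cohomology maps above to be injective, since a nonzero kernel would produce a class violating strictness. This must moreover be established in the sheaf-theoretic form giving injectivity of $R^qf_*\mathcal O_Y(L)\to R^qf_*\mathcal O_Y(L+D)$ and of its images under $H^i(X,-)$, as both (a) and (b) will draw on these relative versions.

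Granting injectivity, part (a) follows by Noetherian induction on dimension. Working locally over an affine open of $X$, where $f$-semi-ampleness furnishes actual members of $|mH|$, I choose a general $D\in|mH|$, so that $D$ meets the strata transversally, contains no stratum, and the pair $(D,B|_D)$ is again a nice simple normal crossing pair with strata cut out by those of $(Y,B)$. Injectivity makes the connecting maps vanish, yielding short exact sequences $0\to R^qf_*\mathcal O_Y(L)\to R^qf_*\mathcal O_Y(L+D)\to R^qf_*\mathcal O_D(L+D)\to 0$; comparing associated primes and invoking the inductive hypothesis for $(D,B|_D)$ shows that every associated prime of $R^qf_*\mathcal O_Y(L)$ is the generic point of an $f$-image of a stratum, which is the support statement in (a).

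For part (b) I would use the stronger hypothesis $H\sim_{\mathbb Q}f^*H'$ with $H'$ being $\pi$-ample. Reducing to $S$ affine, it suffices to show $H^p(X,R^qf_*\mathcal O_Y(L))=0$ for $p>0$. Taking $D=f^*\widetilde D$ with $\widetilde D\in|mH'|$ and applying the projection formula gives $R^qf_*\mathcal O_Y(L+D)\cong R^qf_*\mathcal O_Y(L)\otimes\mathcal O_X(\widetilde D)$; Serre vanishing for the $\pi$-ample $H'$ kills $H^p$ of the twisted sheaf for $m\gg 0$, and the relative injectivity theorem makes $H^p(X,R^qf_*\mathcal O_Y(L))$ inject into this vanishing group, whence $R^p\pi_*R^qf_*\mathcal O_Y(L)=0$ for $p>0$. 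I expect the genuine obstacle to lie entirely in the injectivity step: controlling the mixed Hodge structure of the reducible variety $Y$—making the cyclic cover, the resolution, and the weight spectral sequence cooperate without ever destroying the simple normal crossing structure or merging distinct strata—is where all the real work resides, while parts (a) and (b) are then essentially formal consequences.
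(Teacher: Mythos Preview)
The paper does not supply its own proof of this theorem: immediately after the statement it writes ``For the proof of Theorem~\ref{ap1}, see \cite[Chapter~2]{fujino4}.'' The result is imported as a black box from Ambro and Fujino, so there is no in-paper argument against which to compare your proposal.

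Your overall architecture---establish a Koll\'ar-type injectivity theorem for the simple normal crossing pair $(Y,B)$ via $E_1$-degeneration and mixed Hodge theory, then deduce (a) and (b) as formal consequences---is exactly the route taken in the cited reference, and your derivation of (b) from injectivity combined with Serre vanishing for the $\pi$-ample $H'$ is the standard one there.

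There is, however, a genuine gap in your argument for (a). From the short exact sequence
\[
0\to R^qf_*\mathcal O_Y(L)\to R^qf_*\mathcal O_Y(L+D)\to R^qf_*\mathcal O_D(L+D)\to 0
\]
together with the inductive hypothesis on $(D,B|_D)$ you obtain only that the associated primes of $R^qf_*\mathcal O_Y(L)$ are among those of $R^qf_*\mathcal O_Y(L+D)$; but the middle sheaf satisfies the very same hypotheses as the left one (with $H$ replaced by $(m+1)H$), so neither induction on $\dim Y$ nor the information about the quotient gives you any purchase on it. The argument in \cite{fujino4}, going back to Koll\'ar in the smooth case, is direct rather than inductive: assuming a nonzero local section $s$ of $R^qf_*\mathcal O_Y(L)$ has support containing no $f$-image of a stratum, one shrinks $X$ to an affine open, chooses a function $g$ on $X$ vanishing on $\Supp(s)$ but on no $f$-image of a stratum, and applies injectivity with the effective divisor $D=\operatorname{div}(f^*g)$ (which is principal and contains no stratum); since some power $g^N$ annihilates $s$, this contradicts the injectivity of multiplication by $g^N$ on $R^qf_*\mathcal O_Y(L)$. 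No induction on dimension is involved.
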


For the proof of Theorem \ref{ap1}, see \cite[Chapter 2]{fujino4}. 
By the above theorem, we can easily 
obtain the following theorem. 
For the details, 
see \cite[Theorem 4.4]{ambro} 
and \cite[Theorem 3.39]{fujino4}. 

\begin{thm}\label{ap2} 
Let $(X, B)$ be an lc pair. 
Let $C$ be an lc center of $(X, B)$. 
We consider the short exact sequence 
$$
0\to \mathcal I_{C}\to \mathcal O_X\to \mathcal O_{C}\to 0,  
$$ 
where $\mathcal I_{C}$ is the defining ideal sheaf of 
$C$ on $X$. 
Assume that $X$ is projective. 
Let $\mathcal L$ be a line bundle on $X$ such that 
$\mathcal L-(K_X+B)$ is ample. 
Then $H^q(X, \mathcal L)=0$ and 
$H^q(X, \mathcal I_{C}\otimes \mathcal L)=0$ for all 
$q>0$. 
In particular, the restriction map 
$H^0(X, \mathcal L)\to H^0(C, \mathcal L|_{C})$ is 
surjective. 
\end{thm}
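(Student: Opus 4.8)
The plan is to deduce this statement from the torsion-free and vanishing package of Theorem \ref{ap1} by pulling everything back to an embedded simple normal crossing pair. Write $A:=\mathcal L-(K_X+B)$, an ample $\mathbb Q$-divisor, and take a log resolution $f:Y\to X$ of $(X,B)$, so that $Y$ is smooth and the relevant divisors are simple normal crossing. I would write
\[
K_Y+B_Y=f^*(K_X+B)+F,
\]
where $B_Y$ is the effective boundary $\mathbb Q$-divisor collecting the places of non-positive discrepancy (its round-down $\llcorner B_Y\lrcorner$ being precisely the reduced sum of the lc places, i.e.\ the divisors with discrepancy $-1$) and $F\geq 0$ is $f$-exceptional, supported on the places of positive discrepancy. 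Pulling back $\mathcal L=K_X+B+A$ gives
\[
f^*\mathcal L+F=K_Y+B_Y+f^*A.
\]
I would then set $L:=f^*\mathcal L+\ulcorner F\urcorner$, an integral divisor differing from $f^*\mathcal L$ by an effective $f$-exceptional divisor, and absorb the fractional remainder $\ulcorner F\urcorner-F$ into the boundary, replacing $B_Y$ by a new effective boundary $B_Y'$ with $(Y,B_Y')$ simple normal crossing and
\[
L-(K_Y+B_Y')=f^*A.
\]
Since $A$ is ample on $X$, the hypothesis of Theorem \ref{ap1}, (b), is met with $H'=A$.

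For the vanishing of $H^q(X,\mathcal L)$: because $\ulcorner F\urcorner$ is effective and $f$-exceptional, the projection formula gives $f_*\mathcal O_Y(L)=\mathcal O_X(\mathcal L)$, i.e.\ $R^0f_*\mathcal O_Y(L)$ is $\mathcal L$. Applying Theorem \ref{ap1}, (b), with $\pi:X\to S$ the morphism to a point (so that $R^p\pi_*=H^p(X,-)$) gives $H^p(X,R^qf_*\mathcal O_Y(L))=0$ for all $p>0$ and all $q$; taking $q=0$ yields $H^q(X,\mathcal L)=0$ for $q>0$ at once. This settles the first vanishing without any use of Leray degeneration.

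For the ideal-sheaf vanishing I bring in the lc center $C$. By refining the resolution I would arrange that $C$ is the $f$-image of a reduced sub-divisor $T_0\subseteq\llcorner B_Y'\lrcorner$ that is a union of lc places (equivalently, that $C$ is the image of a stratum of the embedded simple normal crossing pair cut out on $\llcorner B_Y'\lrcorner$), and consider the restriction sequence
\[
0\to\mathcal O_Y(L-T_0)\to\mathcal O_Y(L)\to\mathcal O_{T_0}(L)\to 0.
\]
Removing $T_0$ from both the divisor and the boundary preserves $L-T_0-(K_Y+(B_Y'-T_0))=f^*A$, so Theorem \ref{ap1} applies verbatim to $\mathcal O_Y(L-T_0)$ on $(Y,B_Y'-T_0)$, and part (a) (torsion-freeness) shows that the connecting map into $R^1f_*\mathcal O_Y(L-T_0)$ vanishes, since every non-zero local section of the latter contains in its support the image of a stratum, none of which lies in $C$. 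Pushing the sequence forward therefore stays exact; identifying $f_*\mathcal O_Y(L)=\mathcal L$, $f_*\mathcal O_Y(L-T_0)=\mathcal I_C\otimes\mathcal L$, and $f_*\mathcal O_{T_0}(L)=\mathcal L|_C$ recovers the given short exact sequence tensored by $\mathcal L$. Then Theorem \ref{ap1}, (b), applied to $L-T_0$ in degree $q=0$ gives $H^q(X,\mathcal I_C\otimes\mathcal L)=0$ for $q>0$, and the surjectivity of $H^0(X,\mathcal L)\to H^0(C,\mathcal L|_C)$ follows from the long exact cohomology sequence since $H^1(X,\mathcal I_C\otimes\mathcal L)=0$.

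The hard part will be the construction of $T_0$ and the two direct-image identifications $f_*\mathcal O_Y(L-T_0)=\mathcal I_C\otimes\mathcal L$ and $f_*\mathcal O_{T_0}(L)=\mathcal L|_C$, together with the exactness of the pushed-forward sequence. In the lc (as opposed to kawamata log terminal) case the center $C$ need not be normal and is typically the image of an intersection stratum rather than of a single divisor, so one cannot work on a smooth $Y$ alone but must pass to the reducible embedded simple normal crossing pair supported on $\llcorner B_Y'\lrcorner$ and track its strata; it is exactly here that Ambro's general formulation underlying Theorem \ref{ap1}, and the torsion-freeness statement (a), are indispensable. Checking that vanishing along $T_0$ cuts out \emph{precisely} $\mathcal I_C$, and not a larger or smaller ideal, is the one point that requires genuine care.
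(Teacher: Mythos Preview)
The paper does not supply a proof of this theorem; it only asserts that it follows from Theorem \ref{ap1} and refers to \cite{ambro}, \cite{fujino4}, and \cite{fuji3} for details. Your outline is exactly the route the paper indicates, and the reduction of the first vanishing $H^q(X,\mathcal L)=0$ to Theorem \ref{ap1}\,(b) via $f_*\mathcal O_Y(L)\cong\mathcal L$ is correct and cleanly done.

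For the ideal-sheaf part, however, your sketch leaves the key step genuinely open. The assertion that ``every non-zero local section of $R^1f_*\mathcal O_Y(L-T_0)$ contains in its support the image of a stratum, none of which lies in $C$'' is precisely what has to be \emph{arranged}, and it is not automatic for an unspecified $T_0$. The correct choice is to take $T_0$ to be the union of \emph{all} irreducible components of $\llcorner B_Y'\lrcorner$ whose $f$-image is contained in $C$; with this choice the identification $f_*\mathcal O_Y(L-T_0)=\mathcal I_C\otimes\mathcal L$ goes through (one inclusion because some component dominates $C$, the other because every component maps into $C$ and is disjoint from $\ulcorner F\urcorner$). Even so, intersections of the remaining components of $\llcorner B_Y'\lrcorner-T_0$ may still map into $C$; to exclude this one must refine the resolution further so that such strata are separated (for instance, first blow up along $C$ and the lc centers it contains). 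This refinement is the substance behind your phrase ``by refining the resolution,'' and it is the point that the references the paper cites actually carry out. Once this is in place, the identification $f_*\mathcal O_{T_0}(L)=\mathcal L|_C$ need not be checked independently: it drops out of the short exact sequence once the other two terms are identified and exactness on the right is established by the torsion-free argument.

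One small correction to your final paragraph: you do \emph{not} need to leave the smooth $Y$ and pass to the reducible simple normal crossing divisor $\llcorner B_Y'\lrcorner$. As the paper notes just before Theorem \ref{ap1}, a smooth $Y$ with a simple normal crossing boundary $B_Y'$ already fits the framework of Theorem \ref{ap1} via the embedding $Y\simeq Y\times\{0\}\hookrightarrow Y\times\mathbb A^1$, and the strata of $(Y,B_Y')$ are then exactly $Y$ together with the lc centers of the smooth pair. All of your torsion-free and vanishing applications take place on $(Y,B_Y'-T_0)$ in this sense.
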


A simple proof of Theorem \ref{ap2} can be found in \cite{fuji3} (cf.~\cite[Theorem 4.1]{fuji3}). 
For a systematic treatment on this topic, 
we recommend the reader to see \cite{fujino4}. 

\ifx\undefined\bysame
\newcommand{\bysame|{leavemode\hbox to3em{\hrulefill}\,}
\fi

\end{document}